\newtheorem{conjecture}[equation]{Conjecture}
\newtheorem{theorem}[equation]{Theorem}
\newtheorem{lemma}[equation]{Lemma}
\newtheorem{prop}[equation]{Proposition}
\newtheorem{cor}[equation]{Corollary}
\newtheorem{corollary}[equation]{Corollary}
\theoremstyle{remark}
\newtheorem{remark}[equation]{Remark}
\numberwithin{equation}{section}
\newcommand{\abs}[1]{\lvert#1\rvert}
\newcommand{\Sph}{\mathbb{S}}
\newcommand{\id}{\mathrm{Id}}
\newcommand{\N}{\mathbb{N}}
\newcommand{\R}{\mathbb{R}}
\newcommand{\Z}{\mathbb{Z}}
\newcommand{\B}{\mathbb{B}}
\newcommand{\D}{\mathbb{D}}
\newcommand{\ddiv}{\mathrm{div}}
\newcommand{\Ccal}{\mathcal{C}}
\newcommand{\Hscr}{\mathcal{H}}
\newcommand{\Tan}{\mathrm{Tan}}
\newcommand{\spt}{\mathrm{spt}\,}
\newcommand{\Mbold}{\bm{\mathrm{M}}}
\newcommand{\Dscr}{\mathcal{D}}
\begin{document}

\title[Area for genus zero FBMS]{On the areas of genus zero free boundary minimal surfaces embedded in the unit $3$-ball}

\author[P.~McGrath]{Peter~McGrath}
\author[J.~Zou]{Jiahua~Zou}
\date{}
\address[Peter McGrath]{Department of Mathematics, North Carolina State University, Raleigh NC 27695} 
\email{pjmcgrat@ncsu.edu}
\address[Jiahua Zou]{Department of Mathematics, Brown University, Providence RI 02912}
\email{jiahua\_zou@brown.edu}

\begin{abstract}
We prove that the area of each nonflat genus zero free boundary minimal surface embedded in the unit $3$-ball is less than the area of its radial projection to $\Sph^2$.  The inequality is asymptotically sharp, and we prove any sequence of surfaces saturating it 
converges weakly to $\Sph^2$, as currents and as varifolds.
\end{abstract}
\maketitle

\section{Introduction}

\subsection*{Background and main results}
\phantom{ab}

Let $M$ be a compact Riemannian manifold with nonempty boundary.  A minimal immersion $\varphi : \Sigma \rightarrow M$ is called a \emph{free boundary minimal surface} if $\Sigma$ meets $\partial M$ orthogonally along $\partial \Sigma$.  Such surfaces are area-critical for deformations of $M$ which preserve $\partial M$ as a set.

When $M$ is the euclidean unit ball $\B^n$, the class of free boundary minimal surfaces has particularly rich geometric properties.  Fraser-Schoen \cite{FS1} proved the sharp lower area bound $|\Sigma | \geq \pi$ for free boundary minimal $\Sigma \subset \B^n$---equality holds only if $\Sigma$ is an equatorial disk (see also \cite{Brendle:area})---while Fraser-Li proved the following upper area bounds: 

\begin{prop}[Fraser-Li {\cite[Prop. 3.4]{FraserLi}}]
\label{Pupper}
If $\Sigma \subset \B^3$ is an embedded free boundary minimal surface with genus $\gamma$ and $k$ boundary components, then
\begin{align}
\label{FLupper}
|\Sigma| \leq \mathrm{min} \left \{ 2(\gamma +k) \pi, 8 \pi \left \lfloor \frac{\gamma+3}{2}\right\rfloor\right\}.
\end{align}
\end{prop}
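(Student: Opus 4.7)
The plan is to convert both inequalities to Steklov eigenvalue estimates on $\Sigma$. Two observations drive the reduction: (i) the identity $2|\Sigma| = |\partial \Sigma|$, which follows from $\Delta_\Sigma |x|^2/2 = |\nabla^\Sigma x|^2 = 2$ (by minimality) and the divergence theorem combined with $x \cdot \nu = 1$ on $\partial \Sigma$ (by the free boundary condition); and (ii) the fact that the coordinate functions $x_1, x_2, x_3$ are Steklov eigenfunctions with eigenvalue $1$, since $\Delta_\Sigma x_i = 0$ by minimality and $\partial_\nu x_i = x_i$ on $\partial \Sigma$ because $\nu = x$. Integrating the eigenvalue relation further yields $\int_{\partial \Sigma} x_i \, ds = 0$, so the $x_i$ are admissible test functions realizing Rayleigh quotient $1$.

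For the bound $|\Sigma| \leq 2\pi(\gamma+k)$, I would invoke Gabard's theorem to produce a proper holomorphic branched cover $f = u + iv : \Sigma \to \overline{\D}$ of degree $d \leq \gamma+k$. Conformality gives $\int_\Sigma (|\nabla u|^2 + |\nabla v|^2)\, dA = 2\pi d$ (twice the area of $\overline{\D}$ times $d$), while $|f|=1$ on $\partial \Sigma$ yields $\int_{\partial \Sigma}(u^2 + v^2)\, ds = |\partial \Sigma|$. A Hersch-type balancing via Möbius automorphisms of $\overline{\D}$, combined with a Brouwer degree argument, then allows one to arrange $\int_{\partial \Sigma} u = \int_{\partial \Sigma} v = 0$, making $u, v$ admissible. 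Feeding $(u,v)$ together with $(x_1,x_2,x_3)$ into a Courant--Fischer min-max calculation yields $|\partial \Sigma| \leq 4\pi(\gamma+k)$, which with (i) gives the desired area estimate. The bound $|\Sigma| \leq 8\pi\lfloor(\gamma+3)/2\rfloor$ is analogous but Li--Yau-flavored, replacing $\overline{\D}$ by $\Sph^2$: one invokes a Brill--Noether-style gonality estimate (applied to $\Sigma$ directly or to its closed double) to produce a meromorphic map $\Sigma \to \Sph^2$ of degree at most $\lfloor(\gamma+3)/2\rfloor$, whose three coordinate components, after Hersch balancing by conformal automorphisms of $\Sph^2$, serve as mean-zero test functions whose combined Rayleigh quotient—together with the $x_i$—produces the claimed bound.

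The main technical obstacle in both cases is the Möbius balancing step: ruling out the possibility that no element of the parameter family of Möbius transformations renormalizes $f$ to vanishing boundary mean requires a topological degree argument, with extra care needed near degenerate configurations where $f$ concentrates near a single boundary point of $\overline{\D}$ (or $\Sph^2$). A secondary subtlety is the careful dimension counting when combining the test functions derived from the branched cover with the coordinate eigenfunctions $x_i$, which is what produces the precise numerical factors $\gamma+k$ and $\lfloor(\gamma+3)/2\rfloor$ rather than weaker versions.
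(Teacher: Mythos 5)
This proposition is quoted by the paper from Fraser--Li without proof, so the comparison below is with the argument in \cite[Prop.~3.4]{FraserLi} itself. Your reduction to Steklov spectral estimates via $2|\Sigma| = |\partial\Sigma|$, and your use of Gabard/Brill--Noether conformal covers with Hersch balancing, do reproduce the correct \emph{upper} bounds on the normalized eigenvalue, namely $\sigma_1(\Sigma)\,|\partial\Sigma| \leq 2\pi(\gamma+k)$ and $\sigma_1(\Sigma)\,|\partial\Sigma| \leq 8\pi\lfloor(\gamma+3)/2\rfloor$ (for the second, the components of the balanced map to $\Sph^2$ have unit norm on $\partial\Sigma$ and energy $8\pi d$). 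But there is a genuine gap at the step where you convert these into bounds on $|\partial\Sigma|$ alone. Knowing that the coordinate functions are eigenfunctions with eigenvalue $1$ and boundary mean zero only gives $\sigma_1 \leq 1$, which is an inequality in the \emph{wrong} direction: from $\sigma_1 L \leq C$ and $\sigma_1\le 1$ one cannot bound $L$. No Courant--Fischer calculation combining $(u,v)$ with $(x_1,x_2,x_3)$ can close this, because the $x_i$ need not be \emph{first} eigenfunctions (that they are is precisely Conjecture \ref{CFL}, open). The missing ingredient is Fraser--Li's lower bound $\sigma_1(\Sigma)\geq \tfrac12$ for properly \emph{embedded} free boundary minimal surfaces in $\B^3$, proved by a Choi--Wang-type Reilly formula argument applied to harmonic extensions on the two components of $\B^3\setminus\Sigma$. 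With that in hand, $|\Sigma| = \tfrac12 L \le \sigma_1 L$, and both bounds in \eqref{FLupper} follow at once from the eigenvalue estimates you derived.

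The omission is not cosmetic: your proposed proof never uses embeddedness, so if it were correct it would apply to immersed surfaces, contradicting the existence (noted in the paper's own Remark, citing \cite{Fernandez, KapMcG}) of immersed free boundary minimal annuli in $\B^3$ with arbitrarily large area, for which \eqref{FLupper} would give $|\Sigma|\le 4\pi$. Two smaller points: the "careful dimension counting" you allude to does not produce the constants --- they come entirely from the degrees of the conformal maps together with the factor $2$ lost in $\sigma_1\ge\tfrac12$; and for the second bound one should cap off the boundary circles to get a closed genus-$\gamma$ surface before applying Brill--Noether, rather than pass to the double (which has genus $2\gamma+k-1$ and yields a worse degree).
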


The bounds in  \eqref{FLupper} are not optimal, and one purpose of this paper is to prove a sharp upper bound when the genus $\gamma$ is zero.  

In surprising recent developments, several authors \cite{GL, KapZou, KarpukhinStern} have constructed sequences of genus zero free boundary minimal surfaces embedded in $\B^3$ which converge to the positively curved boundary $\partial \B^3 = \Sph^2$ in the sense of varifolds.  In particular, the areas of these surfaces limits to $4\pi$.  

As a corollary of the following stronger result, it follows that $4\pi$ is in fact the optimal upper bound: 

\begin{theorem}
\label{Tmain}
Let $\Sigma \subset \B^3$ be an embedded free boundary minimal surface with genus zero and at least two boundary components.  Then  
\begin{align*}
 | \Sigma | < |\Omega|,
\end{align*}
where $\Omega \subset \Sph^2$ is the radial projection of $\Sigma$ to $\Sph^2$. 
\end{theorem}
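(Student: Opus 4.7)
The plan is to analyze the radial projection $\phi : \Sigma \to \Sph^2$, $\phi(p) = p/|p|$, by combining integral identities for free boundary minimal surfaces with the topology of the embedding.

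First I would set up the framework. Orient $\Sigma$ so that the unit normal $\nu$ points from the component of $\B^3 \setminus \Sigma$ containing the origin into the other component $A_1$, and set $S_1 = A_1 \cap \Sph^2$. A direct computation using $d\phi_p(v) = v/|p| - (p \cdot v) p / |p|^3$ shows that the absolute Jacobian of $\phi$ at $p$ is $|p \cdot \nu|/|p|^3$, so $|\Omega| \leq \int_\Sigma |p \cdot \nu|/|p|^3\, dA$, with equality iff $\phi$ is injective. Using minimality and the free boundary condition $\eta = p$ on $\partial \Sigma$, I would establish three key integral identities:
\[
|\partial \Sigma| = 2|\Sigma|, \qquad \int_\Sigma \frac{(p \cdot \nu)^2}{|p|^4}\, dA = |\Sigma|, \qquad \int_\Sigma \frac{p \cdot \nu}{|p|^3}\, dA = |S_1|.
\]
The first comes from $\Delta_\Sigma |p|^2 = 4$; the second from $\Delta_\Sigma \log |p| = 2(p \cdot \nu)^2/|p|^4$ together with $\partial_\eta \log|p|=1$ on $\partial\Sigma$; and the third from applying the divergence theorem to the divergence-free vector field $p/|p|^3$ on $A_1$ (whose boundary is $\Sigma \cup S_1$ and which does not contain the origin).

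For the upper bound $|\Sigma| < |\Omega|$, note that any $\omega \in S_1$ is reached from the origin only by crossing $\Sigma$, so $S_1 \subseteq \Omega$ and $|\Omega| \geq |S_1|$. Subtracting the second from the third identity,
\[
|\Omega| - |\Sigma| \;\geq\; |S_1| - |\Sigma| \;=\; \int_\Sigma \frac{(p \cdot \nu)(|p| - p \cdot \nu)}{|p|^4}\, dA;
\]
the integrand has the sign of $p \cdot \nu$, since $|p| - p \cdot \nu \geq 0$ pointwise. I would then decompose $\Sigma$ into radial sheets and argue that on the ``outermost sheet'' (points $p \in \Sigma$ where $|p|$ is maximal along the radial ray through $p$) embeddedness forces $p \cdot \nu \geq 0$, with any inner-sheet contributions (where $p \cdot \nu$ may be negative) strictly dominated.

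For the lower bound $|\Omega|/2 < |\Sigma|$, equivalently $|\Omega| < |\partial\Sigma|$, I would note that $\partial\Omega \subseteq \partial\Sigma \cup \phi(\{p \cdot \nu = 0\})$ on $\Sph^2$ and combine the spherical isoperimetric inequality with a length bound on $\phi(\{p \cdot \nu = 0\})$ (controlled through the identity $\int_\Sigma (p \cdot \nu)^2/|p|^4\, dA = |\Sigma|$). The main obstacle I expect is the positivity argument in the upper bound: organizing $\Sigma$ into radial sheets and dominating the inner-sheet contributions requires using embeddedness and the genus-zero hypothesis essentially, since the analytic identities alone do not determine the sign of $p \cdot \nu$.
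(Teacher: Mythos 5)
Your three integral identities are all correct ($|\partial\Sigma|=2|\Sigma|$ from $\Delta_\Sigma|p|^2=4$, the $L^2$ identity from $\Delta_\Sigma\log|p|=2(p\cdot\nu)^2/|p|^4$, and the flux identity $\int_\Sigma (p\cdot\nu)/|p|^3=|S_1|$), and they would give a genuinely different, flux-based route to $|\Sigma|<|\Omega|$ \emph{provided} $p\cdot\nu\geq 0$ on all of $\Sigma$. But that sign condition is precisely the key lemma you are missing, and the plan to ``decompose into radial sheets and dominate the inner-sheet contributions'' is not a viable substitute: along a generic ray the sign of $p\cdot\nu$ alternates at successive intersection points with $\Sigma$, and nothing in your identities weights the positive crossings more heavily than the negative ones, so the integrand's sign is genuinely undetermined by the analysis alone. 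The paper's resolution is that there are \emph{no} inner sheets: by the two-piece property of Lima--Menezes \cite{LimaMenezes}, every plane through the origin separates $\Sigma$ into exactly two pieces, and a nodal-domain argument using genus zero shows no such plane meets the interior of $\Sigma$ tangentially; hence $0\notin\Sigma$ and each ray meets $\Sigma$ at most once (Lemma \ref{Cfs}). This is exactly where embeddedness and genus zero enter, and without it your upper-bound argument does not close. (Note also that $0\notin\Sigma$, which you use implicitly when applying the divergence theorem to $p/|p|^3$ on $A_1$, itself requires this lemma.)

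The lower bound strategy is broken rather than merely incomplete. First, the $L^2$ identity $\int_\Sigma (p\cdot\nu)^2/|p|^4=|\Sigma|$ gives no control whatsoever on the one-dimensional measure of the nodal set $\{p\cdot\nu=0\}$ or of its image under $\phi$. Second, and decisively, the spherical isoperimetric inequality points the wrong way once $|\Omega|>2\pi$: it yields $|\partial\Omega|\geq\sqrt{|\Omega|(4\pi-|\Omega|)}$, which tends to $0$ as $|\Omega|\to 4\pi$, whereas you need a lower bound on $|\partial\Sigma|=2|\Sigma|$ of size $|\Omega|$. The disk doublings of \cite{FPZ} have $|\Omega|\to 4\pi$ with $|\Sigma|\to 2\pi$ and show that $\tfrac{1}{2}|\Omega|<|\Sigma|$ is asymptotically sharp, so there is no isoperimetric slack to exploit. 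The paper instead proves the lower bound by applying Gauss--Bonnet to both $\Sigma$ and $\Omega$, using $\chi(\Sigma)=\chi(\Omega)$ (from the radial graph property), $\kappa_{g,\Sigma}=1$ on $\partial\Sigma$ (free boundary condition), $K_\Sigma=-|A|^2/2\leq 0$ (minimality), and the convexity of the boundary curves in $\Sph^2$ --- the last point again a consequence of the two-piece property via Lemma \ref{Cfs}(ii). You should expect any successful lower-bound argument to pass through these topological and convexity inputs rather than through isoperimetry.
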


Because the only free boundary minimal disks are the totally geodesic equators \cite{Nitsche}, an immediate consequence is the following optimal area bound:

\begin{cor}
\label{Cmain}
Each genus zero free boundary minimal surface embedded in the unit $3$-ball has area strictly less than $4\pi$. 
\end{cor}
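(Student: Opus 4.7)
The plan is to deduce Corollary \ref{Cmain} directly from Theorem \ref{Tmain} after peeling off the trivial case of a disk. Let $\Sigma \subset \B^3$ be an embedded genus zero free boundary minimal surface with $k$ boundary components, so $k \geq 1$.

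First I would dispense with the case $k = 1$. A connected genus zero surface with a single boundary component is topologically a disk, and by the Fraser–Schoen uniqueness result \cite{FSIMRN} already invoked in the paragraph preceding the corollary, every free boundary minimal disk in $\B^3$ is a totally geodesic equator. Hence $|\Sigma| = \pi$, which is strictly less than $4\pi$.

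For $k \geq 2$, I would apply Theorem \ref{Tmain} verbatim. The theorem provides the upper bound $|\Sigma| < |\Omega|$, where $\Omega \subset \Sph^2$ is the radial projection of $\Sigma$ to the unit sphere. Since $\Omega$ sits inside $\Sph^2$, one has the trivial estimate $|\Omega| \leq |\Sph^2| = 4\pi$, and chaining these inequalities yields $|\Sigma| < 4\pi$, as desired.

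There is essentially no obstacle in the corollary itself; the entire content is in Theorem \ref{Tmain}, which is where the real work lies. The only thing to double-check is that the dichotomy $k = 1$ versus $k \geq 2$ is exhaustive for connected embedded genus zero free boundary minimal surfaces, and that the radial projection in Theorem \ref{Tmain} is well-defined and genuinely contained in $\Sph^2$ — both of which are already built into the statements being cited.
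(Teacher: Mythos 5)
Your proof is correct and matches the paper's own deduction exactly: the paper likewise disposes of the one-boundary-component case via the Fraser--Schoen uniqueness of free boundary minimal disks \cite{FSIMRN} and then applies Theorem \ref{Tmain} together with $|\Omega| \leq |\Sph^2| = 4\pi$ for the case of at least two boundary components. No issues.
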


The sequences of surfaces converging weakly to the sphere from \cite{GL, KapZou, KarpukhinStern}  show the inequality in Theorem \ref{Tmain} is asymptotically sharp.    Our second main result confirms that this kind of weak convergence is the \emph{only} way the inequality can be saturated:

\begin{theorem}
\label{Tconv}
For each $n \in \N$, let $\Sigma_n \subset \B^3$ be an embedded free boundary minimal surface with genus zero and at least two boundary components, and let $\Omega_n \subset \Sph^2$ be its radial projection.
If  $| \Omega_n| - |\Sigma_n|\rightarrow 0$ as $n\rightarrow \infty$, then $\{\Sigma_n\}_{n \in \N}$ converges to $\Sph^2$, as varifolds and as currents. 
\end{theorem}

\begin{remark}
We note the following regarding Theorems \ref{Tmain} and \ref{Tconv}: 
\begin{itemize}
\item The embeddedness is essential: there are free boundary minimal annuli immersed in $\B^3$ with arbitrarily large area \cite{Fernandez, KapMcG}.  

\item By \cite[Theorem 1.2]{FraserLi}, no sequence of surfaces saturating the inequality in \ref{Tmain} has a bounded number of boundary components.

\item A result analogous to \ref{Tmain} where $\B^3$ is replaced by a hemisphere in the round $3$-sphere $\Sph^3$ is false: this can be seen by considering the area expansion \cite[Theorem A(v)]{KapMcGLDG} applied to doublings of the equatorial $2$-sphere \cite{kapsph, kapmcgsph} in $\Sph^3$.
\end{itemize}
\end{remark}
\subsection*{The general framework}
\phantom{ab}

The theory of free boundary minimal surfaces in $\B^n$ is intertwined with the spectral geometry of the Steklov eigenvalue problem.  A metric on a compact surface $\Sigma$ which maximizes $\sigma_1(\Sigma) |\partial \Sigma|$, where $\sigma_1(\Sigma)$ is the first Steklov eigenvalue, arises \cite{FS2} as the induced metric on a free boundary minimal surface in $\B^n$.  The free boundary minimal surfaces in \cite{GL, KarpukhinStern} mentioned above arise this way, and these surfaces saturate Kokarev's upper bound \cite{Kokarev}
\begin{align}
\label{Ekokarev}
\sigma_1(\Sigma) |\partial \Sigma| < 8\pi,
\end{align}
valid for any compact genus zero surface with boundary.  

On the other hand, if $\Sigma$ is also a free boundary minimal surface, applying the divergence theorem to the position vector field $V(x) = x$ establishes 
\begin{align}
\label{Epos}
2 | \Sigma| = \int_{\Sigma} \ddiv_\Sigma V = \int_{\partial \Sigma} \langle V, x\rangle = | \partial \Sigma|. 
\end{align}

Combining \eqref{Ekokarev} and \eqref{Epos} shows that the following well-known conjecture posed by Fraser-Li \cite{FraserLi} implies Corollary \ref{Cmain}:

\begin{conjecture}
 \label{CFL}
 Let $\Sigma$ be a properly embedded free boundary minimal hypersurface in the Euclidean unit ball $\B^n$.  Then $\sigma_1(\Sigma) = 1$.
\end{conjecture}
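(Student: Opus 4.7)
The strategy is to separate the conjecture into the straightforward upper bound $\sigma_1(\Sigma) \leq 1$ and the deep lower bound $\sigma_1(\Sigma) \geq 1$. For the upper bound, each ambient coordinate function $x_i$ restricted to $\Sigma$ is harmonic (since $\Sigma$ is minimal and $\Delta_\Sigma x_i$ equals the $i$-th component of the mean curvature vector, which vanishes), satisfies $\partial x_i / \partial \nu = x_i$ on $\partial \Sigma$ (the free boundary condition), and has $\int_{\partial \Sigma} x_i = 0$ (apply Green's identity to $\Delta_\Sigma x_i = 0$). Substitution into the Rayleigh quotient for the Steklov problem then yields $\sigma_1(\Sigma) \leq 1$.

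For the lower bound I would argue by contradiction: assume there exists a Steklov eigenfunction $u$ with eigenvalue $\sigma < 1$ and $\int_{\partial \Sigma} u = 0$. Applying Green's identity to the pair $(u, x_i)$---both harmonic on $\Sigma$---and inserting the boundary conditions yields the orthogonality relation
\begin{align*}
(1-\sigma) \int_{\partial \Sigma} u \, x_i = 0,
\end{align*}
so $u$ is $L^2(\partial \Sigma)$-orthogonal to every coordinate function, and by Courant's nodal theorem the set $\{u=0\}$ separates $\Sigma$ into exactly two components $\Sigma^\pm$.

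The plan is then to exploit embeddedness to rule out this configuration. One approach is to apply a Reilly-type identity on each nodal domain $\Sigma^\pm$, using that the second fundamental form of $\partial \Sigma \subset \Sigma$ equals the induced metric (a direct consequence of free boundary at the unit sphere, since the outward conormal is the position vector $x$), together with the Simons identity and Morse-index bounds coming from embeddedness, to produce a contradiction with $\sigma < 1$. A complementary geometric approach is to view $u$ as a test weight and deform $\Sigma^\pm$ along the gradient of a suitable ambient extension of $u$, preserving the free boundary structure, and extract an area-decreasing deformation---contradicting minimality---unless $u$ is itself a coordinate combination.

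The main obstacle is that this conjecture is longstanding and open in almost all cases; it is known only for equatorial disks and, under symmetry hypotheses, for critical catenoids and a handful of other examples. The area bound of Corollary \ref{Cmain}, combined with \eqref{Epos} and Kokarev's bound \eqref{Ekokarev}, only gives $\sigma_1(\Sigma)\, |\partial \Sigma| < 8\pi$ in the genus zero setting, which is consistent with but strictly weaker than $\sigma_1(\Sigma) = 1$. A successful proof likely requires a new rigidity mechanism tying the Steklov spectrum of $\Sigma$ to the ambient ball geometry beyond the identities above; the tilt-excess formula \eqref{Etiltexcess} hints that such a mechanism may reside in the geometry of the radial projection $\pi : \Sigma \to \Sph^2$, at least in the genus zero case.
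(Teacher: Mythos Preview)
The paper does not prove this statement: it is stated as Conjecture~\ref{CFL}, an open problem attributed to Fraser--Li, and the paper's contribution is to prove Corollary~\ref{Cmain} independently as \emph{evidence} for it. So there is no ``paper's own proof'' to compare against.

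Your proposal is accordingly not a proof but a sketch of partial progress together with an honest admission that the problem is open. The upper bound $\sigma_1(\Sigma)\le 1$ you give is standard and correct. The orthogonality relation $(1-\sigma)\int_{\partial\Sigma} u\,x_i = 0$ is also correct. But from there the argument dissolves: the two ``approaches'' you outline for the lower bound---a Reilly-type identity on nodal domains, or an area-decreasing deformation---are not carried out, and there is no reason to believe either one closes. In particular, Courant's theorem gives at most two nodal domains, not exactly two, and nothing in the sketch explains how embeddedness enters to force a contradiction with $\sigma<1$. Your final paragraph correctly identifies that the conjecture remains open and that the paper's results are strictly weaker than $\sigma_1=1$; that paragraph is the accurate summary of the situation, and the preceding ``plan'' should be read as aspirational rather than as a proof.
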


Conjecture \ref{CFL} is a variant of Yau's eigenvalue conjecture \cite{Yau:Problems}, asserting that the first Laplace eigenvalue on any embedded minimal hypersurface in the sphere $\Sph^{n+1}$ is $n$.  Conjecture \ref{CFL} has profound implications other than Corollary \ref{Cmain}, including an improvement in the bounds \eqref{FLupper} by a factor of $2$, and a characterization \cite{FraserSchoen} of the critical catenoid as the unique embedded free boundary minimal annulus in $\B^3$.  Another purpose of this paper is, by proving Corollary \ref{Cmain}, to give further evidence for the validity of Conjecture \ref{CFL}. 

\subsection*{Outline of the strategy and main ideas}
\phantom{ab}

The proof of Theorem \ref{Tmain} is inspired by seminal work of Harvey-Lawson on calibrated geometry, who showed \cite[Theorem 4.9]{HarveyLawson} that if an oriented Riemannian manifold is foliated by minimal hypersurfaces, then each leaf is area-minimizing in its homology class. 

To prove Theorem \ref{Tmain}, let $\Sigma \subset \B^3$ be a free boundary minimal surface, and consider the cone $\Ccal_\Sigma = \{t \Sigma : t \in (0, \infty), p \in \Sigma\} \subset \R^3$.  While it is not true in general, if $\Sigma$ is also embedded and of genus zero, a delicate nodal domain argument using the two-piece property \cite{LimaMenezes} shows that the dilates $t \Sigma$ of $\Sigma$ actually foliate the cone $\Ccal_\Sigma$.  Because the leaves are minimal, the calibration result above shows $\Sigma$ is area-minimizing in $\Ccal_\Sigma$.  In particular, $|\Sigma| < |\Omega|$ since the radial projection $\Omega$ is in $\Ccal_\Sigma$ and $\partial \Omega = \partial \Sigma$. 

The nodal domain argument above adapts an argument \cite[Prop. 8.1]{FraserSchoen} of Fraser-Schoen, and is also similar to arguments in Brendle's characterization \cite{Brendle:shrinker} of the round sphere as the unique compact embedded genus zero self-shrinker for the mean curvature flow.

The starting point for the proof of Theorem \ref{Tconv} is the ``tilt-excess" identity 
\begin{align}
\label{Ete}
|\Omega| - | \Sigma| = \frac{1}{2} \int_{\Omega} | \nu \circ \pi^{-1} - \nu_{\Sph^2}|^2, 
\end{align}
valid for $\Sigma$ and $\Omega$ as in Theorem \ref{Tmain}, where $\pi : \Sigma \rightarrow \Omega \subset \Sph^2$ is the radial projection, and $\nu$ and $\nu_{\Sph^2}$ are the outward pointing normal vectors to $\Sigma$ and $\Sph^2$ respectively.  The right hand side of \eqref{Ete} is strikingly similar to a tilt-excess quantity \cite[Lemma 8.13]{Allard} which plays an important role in the proof of the Allard's celebrated regularity theorem \cite{Allard}.

If $\Sigma_n$ is a sequence of surfaces as in Theorem \ref{Tconv}, we first use the tilt-excess identity \eqref{Ete} in conjunction with the radial graph property and the fact that $\Sigma_n$ minimizes area in its cone $\Ccal_{\Sigma_N}$ to conclude that the Hausdorff distance between $\Sigma_n$ and its radial projection $\Omega_n \subset \Sph^2$ goes to zero as $n \rightarrow \infty$. 

We next consider the rectifiable currents $T_n$ associated to the surfaces $\Sigma_n$.  By a standard compactness result \cite[4.2.17]{Federer} of Federer, $T_n$ converges subsequentially to a rectifiable current $T$.
Because the Hausdorff distance between $\Sigma_n$ and $\Omega_n$ goes to zero, it follows that $T$ is supported in $\Sph^2$. 
Using the convergence properties already established in conjunction with both the minimality and the free boundary condition, we are able to show that $\partial T = 0$.  It then follows from the constancy theorem \cite[4.1.31]{Federer} that the current $T$ is represented by integration over $\Sph^2$, taken with some integer multiplicity.  Using the radial graph property and Theorem \ref{Tmain}, we argue that this multiplicity must be one, and it follows that $\Sigma_n \rightarrow \Sph^2$ in the sense of currents. 

Finally, we use \eqref{Ete} and the convergence in the sense of currents to complete the argument that $\Sigma_n \rightarrow \Sph^2$ as varifolds.

\subsection*{Acknowledgments}
\phantom{ab}

We thank Daniel Stern for a helpful conversation, Nikolaos Kapouleas, for years of support and guidance, and Joseph Hoisington, for pointing out an error in an earlier version of this paper.  We are also indebted to Misha Karpukhin, for encouraging us to prove Theorem \ref{Tconv}, and for sharing with us a partial outline for its proof. 

\section{Proof of Theorem \ref{Tmain}}
We will need the the following result from \cite{KusnerMcGrath}.  We sketch a proof to keep the article self-contained; for more details we refer the interested reader to \cite[Prop. 8.1]{FraserSchoen} and \cite[Lemma 4.2]{Seo}.

\begin{lemma}[Radial graphs]
\label{Cfs}
Let $\Sigma \subset \B^3$ be an embedded free boundary minimal surface of genus zero and at least two boundary components.  Then: 
\begin{enumerate}[label=\emph{(\roman*)}]
\item $\Sigma$ is a radial graph, in the sense that $0\notin \Sigma$ and each ray from $0$ transversally intersects $\Sigma$ at most once; 
\item each component of $\partial \Sigma$ is a convex curve in $\Sph^2$.
\end{enumerate}
\end{lemma}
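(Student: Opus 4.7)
Both parts will be proved by nodal-domain arguments exploiting the Lima-Menezes two-piece property \cite{LimaMenezes}, in the spirit of Fraser-Schoen \cite[Prop.~8.1]{FraserSchoen}. For part (i), the central object is the support function $\phi := \langle x, \nu \rangle$ on $\Sigma$, where $\nu$ is a chosen unit normal. Its key properties are: $\phi \equiv 0$ on $\partial \Sigma$, because the free boundary condition forces the position vector $x$ to equal the outer conormal of $\partial \Sigma$ in $\Sigma$, hence to be tangent to $\Sigma$ along $\partial\Sigma$; $\phi$ is a Jacobi field, $\Delta_\Sigma \phi + |A|^2 \phi = 0$, since the dilation family $\{t\Sigma\}_{t>0}$ is a variation through minimal surfaces; and $\phi(p) = 0$ if and only if the ray from $0$ through $p$ is tangent to $\Sigma$ at $p$, equivalently, the affine tangent plane $T_p\Sigma$ passes through $0$. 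Hence the assertion that $\Sigma$ is a local radial graph (in particular $0 \notin \Sigma$) is equivalent to $\phi$ having constant sign in the interior.

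To rule out interior zeros of $\phi$, I would argue by contradiction: if $\phi(p_0) = 0$ at some interior $p_0$, the plane $P := T_{p_0}\Sigma$ passes through $0$, so the two-piece property gives that $\Sigma \setminus P$ has exactly two connected components. On the other hand, the Bers expansion for minimal graphs at the tangential contact $p_0$ shows that $\Sigma \cap P$ locally consists of $2k \geq 4$ arcs meeting at equal angles, dividing a disk neighborhood of $p_0$ in $\Sigma$ into $2k$ sectors lying alternately on the two sides of $P$. A careful accounting of how these local sectors close up globally, combined with the genus-zero hypothesis, should produce more than two components of $\Sigma \setminus P$, yielding the desired contradiction. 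Once $\phi$ has constant sign, the radial projection $\pi : \Sigma \to \Sph^2$, $\pi(p) := p/|p|$, is a well-defined local diffeomorphism on the interior and meets $\partial\Sigma$ transversally.

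For global injectivity of $\pi$, I would use two-piece once more: if $\pi$ had a double point at some $q \in \Sph^2$, then for a generic $v \in \Sph^2$ chosen so that the great circle $\{\langle \cdot, v\rangle = 0\}$ separates the two preimages on $\Sph^2$, the preimage nodal set $\Sigma \cap \{\ell_v = 0\}$, where $\ell_v(x) := \langle x, v\rangle$, would separate $\Sigma$ into more than two components, contradicting the two-piece property applied to the plane $\{\ell_v = 0\}$. This injectivity step is the main obstacle: converting a local failure of injectivity into a precise global topological conflict with two-piece requires careful bookkeeping using the genus-zero assumption and the presence of $k \geq 2$ boundary components, and it is where the argument most delicately mirrors \cite[Prop.~8.1]{FraserSchoen}.

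For part (ii), given that $\Sigma$ is a radial graph, each boundary component is a smooth Jordan curve in $\Sph^2$. Using the orthogonality of $\Sigma$ and $\Sph^2$ at $\partial\Sigma$, a direct computation yields that the geodesic curvature of $\partial\Sigma$ as a curve in $\Sph^2$ equals $\pm A_\Sigma(T,T)$, where $T$ is the unit tangent to $\partial\Sigma$; by minimality of $\Sigma$ this equals $\mp A_\Sigma(\eta, \eta)$ with $\eta$ the outer conormal. To show this has constant sign along each component, I would argue by contradiction: at a conjectural vanishing point the tangent plane to $\Sigma$ acquires a special configuration with respect to the radial line, and applying two-piece to a well-chosen plane through $0$ containing this direction produces a contradiction. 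This convexity step should be more routine once the radial graph structure of part (i) is in hand.
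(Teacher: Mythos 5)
Your local analysis (the support function $\phi = \langle x, \nu\rangle$ vanishing exactly where an affine tangent plane passes through the origin, and the nodal/two-piece argument ruling out such interior tangencies on a genus zero surface) matches the paper's proof of the local graph property in (i). The genuine gap is in your global injectivity step. If $\pi(p_1) = \pi(p_2) = q$ with $p_1 \neq p_2$, then $p_1$ and $p_2$ both lie on the single ray $\{tq : t>0\}$, so \emph{no} linear $2$-plane $\{\ell_v = 0\}$ can separate them: either the plane contains the ray or both points lie strictly on the same side. The two-piece property, which only applies to planes through the origin, therefore cannot be invoked to rule out double points of the radial projection in the way you propose, and this is precisely the step you yourself flag as the main obstacle. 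The paper closes it by an entirely different, topological mechanism: it first proves (ii), that each transverse linear plane meets a given boundary component in exactly two points (again by the two-piece property plus genus zero), so each boundary curve is convex in $\Sph^2$ and bounds a convex disk; it then extends the local diffeomorphism to a local homeomorphism $\Phi : \Sph^2 \to \Sph^2$ by capping off each boundary component with the corresponding convex disk, and concludes that $\Phi$ is a global homeomorphism because $\Sph^2$ is simply connected (a proper local homeomorphism onto a simply connected space is a covering of degree one). Note that in this scheme (ii) must be established \emph{before} global injectivity, whereas your plan derives (ii) from the radial graph structure.

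Your treatment of (ii) is also vaguer than necessary: rather than tracking the sign of $A(T,T)$ along the boundary and analyzing a ``special configuration'' at a putative zero, the paper obtains convexity directly from the statement that every great circle crossing a boundary component transversely does so in exactly two points, which is the standard characterization of convexity for Jordan curves in $\Sph^2$ and follows from the same nodal count as in (i). I would recommend replacing your separation argument for injectivity with the covering-space argument, and simplifying (ii) accordingly.
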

\begin{proof}
By the two-piece property \cite{LimaMenezes}, each  $2$-dimensional linear subspace separates $\Sigma$ into exactly two pieces.  It follows that no such subspace meets the interior of $\Sigma$ tangentially, for otherwise the intersection would contain at least two arcs meeting at a point of tangential contact, and would---since $\Sigma$ has genus zero---separate $\Sigma$ into more than two pieces.
Consequently, $0 \notin \Sigma$ and the differential $d\pi_p$ of the radial projection $\pi : \Sigma \rightarrow \Omega$ is injective for each $p \in \Sigma \setminus \partial \Sigma$.  Thus $\pi$ is a local diffeomorphism by the inverse function theorem.

Again using the two-piece property and the fact that $\Sigma$ has genus zero, a nodal domain argument similar to the one above shows that a $2$-dimensional subspace which meets a component of $\partial \Sigma$ transversely meets it at exactly two points, and item (ii) follows. 

To prove that $\pi$ is a global homeomorphism, let $\varphi : M \rightarrow \Sigma$ be a parametrizing immersion, where the surface $M$ is a subset of $\Sph^2$.  Each component $S \subset \partial M$ bounds a disk $D_S \subset \Sph^2 \setminus M$, and the preceding shows that $\varphi(S)$ bounds a convex disk $D_{\varphi(S)} \subset \Sph^2$.  Therefore $\varphi$ can be extended to a local homeomorphism $\Phi : \Sph^2 \rightarrow \Sph^2$ with the property that $\Phi(D_S) = D_{\varphi(S)}$ for each component $S$ of $\partial M$.  Since $\Sph^2$ is simply connected, the local homeomorphism $\Phi$ is actually a global homeomorphism, completing the proof.
\end{proof}

\begin{theorem}
\label{Tcal}
Let $\Sigma \subset \B^3$ be an embedded free boundary minimal surface with genus zero.  Then $\Sigma$ is area-minimizing in its cone
\[
\Ccal_\Sigma = \{ t p : t \in (0, \infty), p \in \Sigma\} \subset \R^3.
\]
\end{theorem}
\begin{proof} 
If $\Sigma$ is a disk, it is totally geodesic by \cite{Nitsche} and the conclusion to the Theorem holds trivially.  Therefore, we may assume $\Sigma$ has at least two boundary components. 

By the radial graph property, $\Ccal_\Sigma$ is foliated by the dilates $t \Sigma$ of $\Sigma$.  Let $\nu$ be the unit outward pointing normal vector field on $\Sigma$, and extend $\nu$ to a unit smooth vector field (with the same name) on $\Ccal_\Sigma$ by requesting $\nu$ is normal to each leaf $t \Sigma$. 

For any $p \in \Ccal_\Sigma$, let $\{e_1, e_2, e_3\}$ be an orthonormal frame for $T_p\R^3$ with the property that $e_3 = \nu$.  Then 
\begin{align*}
\ddiv\,  \nu = \sum_{i=1}^2 \langle D_{e_i} \nu, e_i \rangle + \langle D_\nu \nu, \nu \rangle
= H_{t \Sigma} + 0 = 0, 
\end{align*}
where we have used the minimality of $t \Sigma$ and that $ \langle D_\nu \nu, \nu \rangle = \frac{1}{2} \nu \langle \nu , \nu \rangle = 0$, since $\nu$ is a unit vector field. 

Let $\omega$ be the volume form on $\Ccal_\Sigma$.  Then the $1$-form $\varphi := i_\nu \omega$ on $\Ccal_\Sigma$ is closed because $d \varphi = \ddiv\,  \nu \, \omega = 0$, so Stokes' theorem implies that for any oriented surface $\Sigma' \subset \Ccal_\Sigma$  homologous to $\Sigma$, 
\begin{align}
\label{Ecalb}
|\Sigma| = \int_{\Sigma} \varphi = \int_{\Sigma'} \varphi = \int_{\Sigma'} \langle \nu, \nu_{\Sigma'}\rangle \leq |\Sigma'|.
\end{align}

\end{proof}

As a corollary, we have the following result, which immediately implies Theorem \ref{Tmain}:
\begin{corollary}[Tilt-excess identity]
\label{Ctmain}
For $\Sigma$ and $\Omega$ as in Theorem \ref{Tmain}, 
\begin{align}
\label{Etiltexcess}
|\Omega| - | \Sigma| = \frac{1}{2} \int_{\Omega} | \nu \circ \pi^{-1} - \nu_{\Sph^2}|^2, 
\end{align}
where $\pi : \Sigma \rightarrow \Omega$ is the radial projection, and $\nu$ and $\nu_{\Sph^2}$ are the normals to $\Sigma$ and $\Omega$, respectively. 
\end{corollary}
\begin{proof}
Since $\Sigma$ is a free boundary minimal surface, $\partial \Sigma = \partial \Omega$.  Furthermore, the radial graph property implies $\Omega \subset \Ccal_\Sigma$.  The conclusion now follows by applying \eqref{Ecalb} with $\Omega$ taking the role of $\Sigma'$,  adding and subtracting $|\Omega| = \int_{\Omega }1$, and observing that $1 - \langle \nu , \nu_{\Sph^2} \rangle = \frac{1}{2} | \nu - \nu_{\Sph^2}|^2$. 
\end{proof}

\begin{remark}
Theorem \ref{Tmain} can also be proved directly from the radial graph property, by
parametrizing $\Sigma$ as a graph over its projection $\Omega \subset \Sph^2$, and we sketch the details here.  Let $u \in C^2(\Omega)$ be the function such that  $X: \Omega \rightarrow \R^3$ given by $X(p) = p - u(p) p$ parametrizes $\Sigma$.

By Lemma \ref{LH}, the mean curvature of $X(\Omega) = \Sigma$ satisfies
\begin{align}
\label{EHmin}
0=(1-u)H = \ddiv \frac{\nabla u}{\sqrt{(1-u)^2 + |\nabla u|^2}} + \frac{2(1-u)}{\sqrt{(1-u)^2 + |\nabla u|^2}}. 
\end{align}
Integrating \eqref{EHmin} over $\Omega_{\varepsilon} : = \{ p \in \Omega : dist(p, \partial \Omega) > \varepsilon\}$, using the divergence theorem, and bounding the last term in \eqref{EHmin} by $2$, we conclude 

\begin{align*}
- \int_{\partial \Omega_{\varepsilon}} \frac{\langle \nabla u, \nu_{\varepsilon} \rangle}{\sqrt{(1-u)^2 + |\nabla u|^2}}
<  2|\Omega_\varepsilon|,
\end{align*}
where $\nu_{\varepsilon}$ is the outward pointing unit normal to $\Omega_\varepsilon$ along $\partial \Omega_\varepsilon$. But $u \searrow 0$ and $\nu_{\varepsilon} = - \nabla u / |\nabla u| + o(1)$ as $\varepsilon \searrow 0$, so we get $|\partial \Sigma| < 2 | \Omega|$ in the limit.
The conclusion now follows from using that $|\partial \Sigma| = 2|\Sigma|$, from \eqref{Epos}. 
\end{remark}

\section{Proof of Theorem \ref{Tconv}}
\label{Sconv}
Throughout this section, let $\{\Sigma_n\}_{n \in \N}$ be a sequence of embedded free boundary minimal surfaces in $\B^3$, each with genus zero and at least two boundary components.  Associated to $\Sigma_n$, let $\pi_n : \Sigma_n \rightarrow \Sph^2$ be its radial projection map, $\iota_n : \Sigma_n \rightarrow \R^3$ its inclusion map, $\nu_n$ its outward pointing unit normal, $da_n$ its area form, and let $\Omega_n = \pi_n(\Sigma_n)$.  As in Theorem \ref{Tconv}, we assume that
\begin{align}
\label{Emconv}
|\Omega_n | - | \Sigma_n | \rightarrow 0
\quad
\text{as}
\quad
n \rightarrow \infty. 
\end{align}

\begin{lemma}
\label{Lproj}
Let $\Sigma \subset \R^3$ be an embedded surface not containing the origin.  For any $p \in \Sigma$, the differential of the radial projection $\pi: \Sigma \rightarrow \Sph^2$ satisfies
\begin{align}
\label{Edetdp}
d\pi_p = \frac{1}{|p|} \Pi_{T_{\pi(p)}\Sph^2}, 
\quad
\det( d\pi_p) = \frac{\langle \nu(p), \nu_{\Sph^2} \circ \pi(p)\rangle}{|p|^2},
\end{align}
where $\Pi_{T_{\pi(p)}\Sph^2}$ is the orthogonal projection to $T_{\pi(p)} \Sph^2$. 
\end{lemma}
\begin{proof}
Straightforward calculation.
\end{proof}

\begin{lemma}
\label{Lconvbd}
The Hausdorff distance $d_{\Hscr}(\Sigma_n, \Omega_n)$ tends to $0$ as $n\rightarrow \infty$.
\end{lemma}
\begin{proof}
Fix a small number $\delta>0$. By Theorem \ref{Tcal}, $\Sigma_n$ is area-minimizing in its cone $\Ccal_{\Sigma_n}$, hence stable; 
consequently, curvature estimates \cite{Schoen} imply that the second fundamental form of each $\Sigma_n$ is bounded on $B_{1-\delta}: = \{ |x| < 1-\delta\}$.  Since $|\Sigma_n | < 4\pi$, it follows from \cite[Proposition 7.14]{CM} that a subsequence of $\Sigma_n$ converges smoothly to a smooth minimal surface $\Sigma$, possibly with multiplicity. 

To complete the proof of the Lemma, it suffices to prove that $\Sigma$ must be empty.  For the sake of a contradiction, suppose that $\Sigma$ is nonempty. 

Pulling back the identity \eqref{Etiltexcess} to $\Sigma_n$ using \eqref{Edetdp} reveals that
\begin{align}
\label{Etilt2}
|\Omega_n| - |\Sigma_n| = \frac{1}{2} \int_{\Sigma_n} \frac{\langle \nu_n , \nu_{\Sph^2}\circ \pi_n\rangle}{|p|^2} |\nu_n - \nu_{\Sph^2} \circ \pi_n|^2.
\end{align}

By taking the limit as $n \rightarrow \infty$, it follows that either $\langle \nu_{\Sigma} , \nu_{\Sph^2}\circ \pi\rangle \equiv 1$ or $\langle \nu_{\Sigma} , \nu_{\Sph^2}\circ \pi\rangle \equiv 0$ on $\Sigma$.  The former alternative implies that $\Sigma$ is contained in a sphere, which would contradict its minimality.  On the other hand, the latter alternative implies that the minimal surface $\Sigma$ is a cone.  Its link is then minimal in $\Sph^2$ by \cite[Prop 6.1.1]{Simons}, hence is a great circle.  Therefore $\Sigma$ is an equatorial disk $\D \subset \B^3$. 

Let $k \in \N$ be the multiplicity of the convergence $\Sigma_n \rightarrow \Sigma$.  We first prove $k \in \{1, 2\}$ by arguing as in \cite[Theorem 8.2]{FraserSchoen}: if $k\geq 3$, then the $1$-dimensional subspace orthogonal to $\D$ would meet $\Sigma_n$ at least three points for all large enough $n$. At least one of the two corresponding rays would then meet $\Sigma_n$ at at least two points, contradicting the radial graph property.

To rule out the cases $k=1$ and $k=2$, we derive a contradiction with \eqref{Emconv} by estimating the corresponding areas inside and outside of the cone
\begin{align*}
\Ccal^{1-\delta}_{\Sigma_n}  = \{ tp : t \in (0, \infty), p \in \Sigma_n \cap B_{1-\delta}\}.
\end{align*}

First suppose $k=1$.  By the radial graph property, each ray in $\Ccal^{1-\delta}_{\Sigma_n}$ meets $\Sigma_n$ exactly once.  Then $\Sigma_n \cap \Ccal^{1-\delta}_{\Sigma_n}$ converges smoothly and uniformly to $\D \cap B_{1-\delta}$ with multiplicity one.  Hence for all large enough $n$,  
\begin{align}
\label{Esigman}
| \Sigma_n | = |\Sigma_n \cap \Ccal^{1-\delta}_{\Sigma_{n}}| + |\Sigma_n \setminus \Ccal^{1-\delta}_{\Sigma_n}| < \pi +  |\Sigma_n \setminus \Ccal^{1-\delta}_{\Sigma_n}| +o(1).
\end{align}

We now estimate $|\Omega_n|$. 
Since each ray in $\Ccal^{1-\delta}_{\Sigma_n}$ meets $\Sigma_n$ exactly once by the radial graph property, $|\Omega_n \cap \Ccal^{1-\delta}_{\Sigma_n}| > 2\pi -C \delta$ for some $C> 0$. 
To estimate $|\Omega \setminus \Ccal^{1-\delta}_{\Sigma_n}|$, note that $\Sigma_n\setminus \Ccal^{1-\delta}_{\Sigma_n}$, $\Omega_n \setminus \Ccal^{1-\delta}_{\Sigma_n}$, and $\partial \Ccal^{1-\delta}_{\Sigma_n}$ bound a domain in $\Ccal_{\Sigma_n} \setminus B_{1-\delta}$.  Since $\Sigma_n$ is area-minimizing in $\Ccal_{\Sigma_n}$ by Theorem \ref{Tcal}, 
\begin{align*}
|\Sigma_n \setminus \Ccal^{1-\delta}_{\Sigma_n} | < |\Omega_n \setminus \Ccal^{1-\delta}_{\Sigma_n}| + C\delta,
\end{align*}
where we have used that the portion of $\partial \Ccal^{1-\delta}_{\Sigma_n}$ outside $B_{1-\delta}$ has area less than $C \delta$. 

Combining the preceding, we find
\begin{align}
\label{Eomegan}
|\Omega_n| > 2\pi + |\Sigma \setminus \Ccal^{1-\delta}_{\Sigma_n}| - C\delta.
\end{align}
Taken together, \eqref{Esigman} and \eqref{Eomegan} contradict \eqref{Emconv}.

Now suppose $k=2$.  We derive a contradiction in a very similar way to the $k=1$ case. 
By taking $n$ large enough, $\Sigma_n \cap \Ccal^{1-\delta}_{\Sigma_n}$ is now a pair of graphs with arbitrarily small $C^1$ norms.  Arguing as before, we have
\begin{align*}
|\Sigma_n| &< 2\pi + |\Sigma_n \setminus \Ccal^{1-\delta}_{\Sigma_n}|+ o(1), \\
| \Omega_n| &> 4\pi + |\Sigma_n \setminus \Ccal^{1-\delta}_{\Sigma_n}|- C\delta,
\end{align*}
which together contradict the assumption that $|\Omega_n| - |\Sigma_n| \rightarrow 0$.
\end{proof}

Let $T_n$ denote the current associated to $\Sigma_n$.  By Theorem \ref{Tmain} and \eqref{Epos},
\begin{align*}
\Mbold (T_n) < 4\pi, 
\quad
\Mbold (\partial T_n)  = 2 \Mbold (T_n) < 8\pi,
\end{align*}
so by the compactness theorem for integral currents \cite[4.2.17]{Federer}, a subsequence of $T_n$, which we may take to be the original sequence, converges to an integral current $T$.  Let $T'_n$ denote the current associated to $\Omega_n$.

\begin{prop}[Properties of $T$]The following hold. 
\label{LT}
\begin{enumerate}[label=\emph{(\roman*)}]
\item $\spt T \subset \Sph^2$.
\item $\Tan^2( \spt T , p) = T_p \Sph^2$ for $\Hscr^2$-almost every $p \in \spt T$. 
\item $T_n - T'_n \rightarrow 0$ as $n \rightarrow \infty$. 
\item $\partial T= 0$.
\item $T = ( \Hscr^2 \llcorner \Sph^2) \wedge \zeta$, where $\zeta$ is the standard $2$-vector field orienting $\Sph^2$.  In other words, $T$ is represented by integration over $\Sph^2$.
\end{enumerate}
\end{prop}
\begin{proof}
Item (i) follows from Lemma \ref{Lconvbd}, and item (ii) follows from (i) in conjunction  with \cite[3.2.18, 3.2.19]{Federer}.

For (iii), fix a $2$-form $\varphi \in \Dscr^2(\R^3)$.  In a neighborhood of $\Sph^2$ in $\R^3$, it is easy to see that there is a smooth function $f$ and smooth $1$-form $\xi$ such that
\begin{align*}
\varphi = f \, \pi^* \omega + dr \wedge \xi,
\end{align*}  
where $\omega$ is the area form on $\Sph^2$ and $r$ is the distance from the origin. 

Note that $\iota_n^* \pi^* \omega = \pi^*_n \omega =  \det(d\pi_n) d a_n$, so 
\begin{align*}
T_n ( f \pi^* \omega)=
\int_{\Sigma_n} f \pi^* \omega = \int_{\Sigma_n} f \det(d\pi_n) da_n = \int_{\Omega_n}  f \circ \pi^{-1}_n \omega
\end{align*}
and hence
\begin{align*}
(T_n - T'_n) ( f \pi^*\omega) = \int_{\Omega_n} ( f\circ \pi^{-1}_n - f) \omega . 
\end{align*}
From this and Lemma \ref{Lconvbd}, it follows that $(T_n- T'_n) (f \pi^* \omega) \rightarrow 0$. 

Next, because $T_n \rightarrow T$, items (i) and (ii) imply that
\begin{align*}
\lim_{n\rightarrow \infty} T_n (dr \wedge \xi) = T(dr \wedge \xi) = T'_n(dr \wedge \xi)=0,
\end{align*}
where we have used that $dr|_{\Sph^2} = 0$.
By combining the preceding with Lemma \ref{Lconvbd}, it follows that $(T_n - T'_n )\varphi \rightarrow 0$, completing the proof of (iii).

For (iv), fix a $1$-form $\xi \in \Dscr^1(\R^3)$.   For any $\varphi \in \Dscr^1(\R^3)$ satisfying $\iota^* \varphi = 0$, where $\iota: \Sph^2 \rightarrow \R^3$ is the inclusion map, $0 = d (\iota^* \varphi) = \iota^* d \varphi$, so (i)-(ii) imply 
 \begin{align*}
 T\,  d \xi  = T \, d ( \xi + \varphi).
 \end{align*}

Consequently, we may assume that $\xi = \pi^* (i_X\omega)$ on a neighborhood of $\Sph^2$ in $\R^3$,   where $\pi : \R^3 \setminus \{0\} \rightarrow \Sph^3$ is the radial projection,
$X$ is a vector field on $\Sph^2$, and $\omega$ is the area form on $\Sph^2$.  We have
\begin{align*}
d \xi = d ( \pi^* i_X \omega ) = \pi^* ( d (i_X \omega )) = \pi^*( \ddiv_{\Sph^2}X \, \omega).
\end{align*}
It follows that
\begin{align*}
\iota_n^* d \xi = (\ddiv_{\Sph^2} X )\circ \pi_n \det (d\pi_n) da_n.
\end{align*}
By extending $X$ to $\R^3 \setminus \{0\}$ by $\pi^*$, so that $X = X \circ \pi$, we have finally that
\begin{align*}
\iota_n^* d \xi
=
(\ddiv_{\R^3} X) \det (d \pi_n) da_n.
\end{align*}

Integrating over $\Sigma_n$ and using that $\ddiv_{\R^3} X = \ddiv_{\Sigma_n} X+ \langle D_{\nu_n} X , \nu_n\rangle$, it follows that $T_n \, d \xi = \int_{\Sigma_n} \iota^*_nd \xi  = (I) + (II) + (III)$, where
\begin{equation*}
\begin{gathered}
(I) =  \int_{\Sigma_n} \ddiv_{\Sigma_n}  X (\det(d\pi_n)-1) da_n, 
\quad
(II) = \int_{\Sigma_n} \ddiv_{\Sigma_n} X da_n,\\
(III) =  \int_{\Sigma_n} \langle D_{\nu_n} X, \nu_n \rangle \det (d\pi_n) da_n.
\end{gathered}
\end{equation*}

From the divergence theorem, the minimality of $\Sigma_n$, the free boundary condition, and that $X|_{\Sph^2}$ is tangential to $\Sph^2$, we find 
\begin{align*}
(II) = \int_{\Sigma_n} \ddiv_{\Sigma_n} X da_n = \int_{\partial \Sigma_n} \langle X, x\rangle da_n = 0.
\end{align*}

The assumption that $|\Omega_n| - | \Sigma_n| \rightarrow 0$ together with \eqref{Etiltexcess} implies that 
\begin{align*}
| \nu_n \circ \pi^{-1}_n - \nu_{\Sph^2} | \chi_{\Omega_n} \rightarrow 0 
\quad
\text{in}
\quad
L^2(\Sph^2),
\end{align*}
hence converges to zero almost everywhere along a subsequence.  Combining this with Lemma \ref{Lproj} and Lemma \ref{Lconvbd} implies that $(1- \det(d \pi^{-1}_n) )\chi_{\Omega_n} \rightarrow 0$ almost everywhere along a subsequence.  Consequently, $|(I)| \rightarrow 0$ by changing variables to write the integral over $\Omega_n$ and using the dominated convergence theorem.  

Arguing along the same lines using that $X = X\circ \pi$ shows that $|(III)|\rightarrow 0$ as $n\rightarrow \infty$.  Thus $0 = \lim T_n\, d \xi_n = T  d \xi =  \partial T\,  \xi$, completing the proof of (iv). 

For (v), item (iv) and the constancy theorem \cite[4.1.31]{Federer} show that
\begin{align*}
 T = k( \Hscr^2 \llcorner \Sph^2) \wedge \zeta
\end{align*} 
for some $k \in \R$, where $\zeta$ is the standard unit $2$-vector field orienting $\Sph^2$; since $T$ is an integral current it follows moreover that $k \in \Z$. 

By combining lower semicontinuity of mass and Theorem \ref{Tmain}, 
\begin{align*}
\Mbold(T) \leq \liminf \Mbold(T_n) \leq 4\pi,
\end{align*}
so $|k| \leq 1$.  
On the other hand, item (iii) and the fact that $T_n \rightarrow T$ mean that $T'_n \rightarrow T$.  Let $\omega \in \Dscr^2(\R^3)$ be a $2$-form restricting to the area form on $\Sph^2$.  Since both $T$ and $T'_n$ are supported in $\Sph^2$, 
\begin{align}
\label{Earealim}
\lim_{n \rightarrow \infty} \Mbold(T'_n) = \lim_{n \rightarrow \infty} T'_n \omega = T\omega = \Mbold(T).
\end{align}
But $\Mbold(T'_n ) \geq  \Mbold(T_n) \geq \pi$ from Theorem \ref{Tmain} and \cite[Theorem 5.4]{FS1}, 
so $\Mbold(T) \geq \pi$. 
Therefore $k=1$ and the proof is complete. 
\end{proof}

\begin{proof}[Proof of Theorem \ref{Tconv}]
It was shown in Proposition \ref{LT}(v) that $\Sigma_n$ converges to $\Sph^2$ in the sense of currents, so it remains to prove the varifold convergence. 

We must show that for any $\Phi \in C(\overline{\B^3} \times \Sph^2)$, 
\begin{align}
\label{Evar}
\left| \int_{\Sigma_n} \Phi(x, \nu_n) - \int_{\Sph^2}\Phi(x, \nu_{\Sph^2})\right| \rightarrow 0
\quad 
\text{as} 
\quad n \rightarrow \infty.
\end{align}
Fix such a $\Phi$.  We have
\begin{align*}
\int_{\Sigma_n} \Phi(x, \nu_n) = \int_{\Omega_n} \det(d\pi^{-1}_n)\Phi( \pi^{-1}_n(x), \nu_n \circ \pi_n^{-1}) .
\end{align*}
Thus $\int_{\Sigma_n} \Phi(x, \nu_n) - \int_{\Sph^2}\Phi(x, \nu_{\Sph^2}) = (I) + (II) + (III)+(IV)$, where
\begin{align*}
(I) &= \int_{\Omega_n} \left(\det ( d\pi_n^{-1}) - 1\right)\Phi( \pi_n^{-1}(x), \nu_n \circ \pi_n^{-1}) , \\
(II) &= \int_{\Omega_n}\Phi( \pi_n^{-1}(x), \nu_n \circ \pi_n^{-1}) - \Phi( x, \nu_n \circ \pi_n^{-1}), \\
(III) &= \int_{\Omega_n}\Phi( x, \nu_n \circ \pi_n^{-1}) - \Phi(x,\nu_{\Sph^2}), \\
(IV) &= \int_{\Omega_n} \Phi(x, \nu_{\Sph^2}) - \int_{\Sph^2} \Phi(x, \nu_{\Sph^2}).
\end{align*}

The assumption that $|\Omega_n| - |\Sigma_n| \rightarrow 0$ together with \eqref{Etiltexcess} implies  that  
\begin{align}
\label{EL1}
| \nu_n \circ \pi^{-1}_n - \nu_{\Sph^2} | \chi_{\Omega_n} \rightarrow 0 
\quad
\text{in}
\quad
L^2(\Sph^2),
\end{align}
hence converges to zero almost everywhere along a subsequence.  Combining this with Lemma \ref{Edetdp} and Lemma \ref{Lconvbd} shows that $(\det(d\pi_n^{-1})-1) \chi_{\Omega_n} \rightarrow 0$ almost everywhere along a subsequence. Consequently, $|(I)| \rightarrow 0$ by the dominated convergence theorem.

The second and third terms are handled similarly: from Lemma \ref{Lconvbd} we have $\pi^{-1}_n - \id_{\Omega_n} \rightarrow 0$ uniformly in $n$, so $|(II)| \rightarrow 0$.  Next, \eqref{EL1} implies that $\nu_n \circ \pi_n^{-1} \chi_{\Omega_n} \rightarrow \nu_{\Sph^2}\chi_{\Omega_n}$ almost everywhere along a subsequence, so $|(III)| \rightarrow 0$ by the dominated convergence theorem.

Finally, we estimate
\begin{align*}
|(IV)| \leq \| \Phi \|_{L^\infty} | \Sph^2 \setminus \Omega_n|.
\end{align*}
It follows from Proposition \ref{LT}(v) and \eqref{Earealim} that $|\Omega_n| \rightarrow |\Sph^2|$, so $|(IV)| \rightarrow 0$ and the proof is complete. 
\end{proof}

\appendix
\section{Mean curvature for radial graphs}
\begin{lemma}
\label{LH}
If $\Omega \subset \Sph^2$ is a domain, $u \in C^2(\Omega)$, and the map $X : \Omega \rightarrow \R^3$ defined by $X(p) = p - u(p)p$
is an immersion, its mean curvature satisfies
\begin{align*}
(1-u)H = \ddiv \frac{\nabla u}{\sqrt{(1-u)^2 + |\nabla u|^2}} + \frac{2(1-u)}{\sqrt{(1-u)^2 + |\nabla u|^2}}. 
\end{align*}
\end{lemma}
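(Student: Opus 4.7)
The plan is to derive the formula by computing the first variation of the area functional over $\Omega$ and comparing it to the standard first-variation-of-area formula.

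As a first step, I would express the area of $\Sigma = X(\Omega)$ as a functional of $u$. Picking a local orthonormal frame $\{e_1, e_2\}$ of $T\Sph^2$ and setting $u_i := e_i u$, the identity $X(p) = (1-u)p$ gives $dX(e_i) = (1-u)e_i - u_i p$. The pullback metric then has components $g_{ij} = (1-u)^2 \delta_{ij} + u_i u_j$ with $\det g = (1-u)^2 W^2$, where $W := \sqrt{(1-u)^2 + |\nabla u|^2}$. Thus the area is $\mathcal{A}[u] = \int_\Omega (1-u)W\, dA_{\Sph^2}$. A direct check using $\langle p, \nabla u\rangle = 0$ confirms that $N := (\nabla u + (1-u)p)/W$ is a unit normal to $\Sigma$.

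Next, I would compute the derivative of $\mathcal{A}[u_s]$ at $s = 0$ in two ways, where $u_s := u + s\eta$ for $\eta$ compactly supported in $\Omega$. Since the ambient velocity is $\partial_s X_s|_{s=0} = -\eta p$ and $\langle p, N\rangle = (1-u)/W$, the normal component of this velocity is $\phi = -\eta(1-u)/W$, so the first variation of area formula gives
\[
\frac{d}{ds}\mathcal{A}[u_s]\bigg|_{s=0} = \int_\Sigma H\phi\, dA_\Sigma = -\int_\Omega H\eta (1-u)^2\, dA_{\Sph^2}.
\]
Differentiating the explicit expression for $\mathcal{A}[u+s\eta]$ and integrating the cross term $(1-u)\langle \nabla u, \nabla\eta\rangle/W$ by parts yields, on the other hand,
\[
\frac{d}{ds}\mathcal{A}[u_s]\bigg|_{s=0} = -\int_\Omega \eta\left[\frac{2(1-u)^2 + |\nabla u|^2}{W} + \ddiv\left(\frac{(1-u)\nabla u}{W}\right)\right] dA_{\Sph^2}.
\]
Equating the integrands (using arbitrariness of $\eta$), applying the product rule $\ddiv((1-u)\nabla u/W) = (1-u)\ddiv(\nabla u/W) - |\nabla u|^2/W$, and dividing by $(1-u)$ gives the stated identity.

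The main obstacle is bookkeeping rather than conceptual difficulty: one must compute $\langle p, N\rangle = (1-u)/W$ to get the right relation between $\phi$ and $\eta$, respect the signs in the first variation formula with the outward-normal convention, and carefully execute the algebraic simplification at the end.
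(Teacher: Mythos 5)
Your proof is correct, but it takes a genuinely different route from the paper's. The paper proves Lemma \ref{LH} by brute force: it computes the coefficients $E,F,G$ and $L,M,N$ of the first and second fundamental forms of $X(\Omega)$ in normal coordinates and evaluates $H=(EN-2MF+GL)/(EG-F^2)$ directly, simplifying the resulting expression into divergence form. You instead read off only the induced metric and the area density $(1-u)W$ with $W=\sqrt{(1-u)^2+|\nabla u|^2}$, and identify $H$ as the Euler--Lagrange operator of the area functional via the first variation formula. I checked your computation: the normal component $\phi=-\eta(1-u)/W$, the two expressions for $\frac{d}{ds}\mathcal{A}[u_s]|_{s=0}$, and the final algebra (including the product-rule step $\ddiv((1-u)\nabla u/W)=(1-u)\ddiv(\nabla u/W)-|\nabla u|^2/W$) all reproduce the stated identity, and your sign conventions ($H=\ddiv_\Sigma N$ with $N=(\nabla u+(1-u)p)/W$, consistent with the $\nu$ used elsewhere in the paper) are coherent. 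Your approach buys a shorter calculation that explains \emph{why} the equation has divergence structure --- which is exactly what the remark following Theorem \ref{Tmain} exploits --- at the cost of invoking the first variation formula and a density argument in $\eta$, and of needing care with orientation conventions; the paper's approach is longer but entirely pointwise and also produces the second fundamental form explicitly. Two small caveats you should make explicit: you use $\sqrt{\det g}=(1-u)W$, which requires $1-u>0$ (automatic in the application, where $\Sigma\subset\B^3$ is a radial graph, and on any connected $\Omega$ the sign of $1-u$ is constant since $X$ is an immersion); and the identification of $H$ from the integral identity holds pointwise by continuity once $\eta$ ranges over all compactly supported test functions.
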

\begin{proof}
Fix $p \in \Omega$, choose a normal coordinate system for $\Omega$ centered at $p$, and use subscripts ``1, 2" to denote partial derivatives with respect to these coordinates.  In particular, $\{p, p_1, p_2\}$ is an orthonormal frame for $T_p\R^3$.   

We have $X_i = (1-u)p_i - u_i p$ for $i=1, 2$, so the normal $\nu$ to $\Sigma$ is
\begin{align*}
\nu = \frac{(1-u)p +\nabla u}{\sqrt{(1-u)^2+\abs{\nabla u}^2}}.
\end{align*}
The coefficients of the first fundamental form for $\Sigma$ are
\begin{equation*}
	E=u_1^2+(1-u)^2, \quad F=u_1u_2, \quad G=u_2^2+(1-u)^2,
\end{equation*}
and the coefficeints of the second fundamental form for $\Sigma$ are
\begin{align*}
\begin{gathered}
	L=\frac{((1-u)+u_{11})(1-u)+2u_1^2}{\sqrt{(1-u)^2+\abs{\nabla u}^2}},
	\quad 
	 M=\frac{u_{12}(1-u)+2u_1u_2}{\sqrt{(1-u)^2+\abs{\nabla u}^2}}, \\
	 N=\frac{((1-u)+u_{22})(1-u)+2u_2^2}{\sqrt{(1-u)^2+\abs{\nabla u}^2}}.
\end{gathered}
\end{align*}
Therefore, 
\begin{multline*}
	(1-u)H=(1-u)\frac{EN-2MF+GL}{EG-F^2}\\
\begin{aligned}
	& =\frac{(u_1^2+(1-u)^2)u_{22}+(u_2^2+(1-u)^2)u_{11}-2u_1u_2u_{12}}{((1-u)^2+\abs{\nabla u}^2)^{3/2}}\\
	 &\phantom{=}+ \frac{3\abs{\nabla u}^2(1-u)+2(1-u)^3}{((1-u)^2+\abs{\nabla u}^2)^{3/2}}\\
	& = \frac{(\abs{\nabla u}^2+(1-u)^2)\Delta u-(u_{11}u_1^2+u_{22}u_2^2+2u_{12}u_1u_2)+\abs{\nabla u}^2(1-u)}{((1-u)^2+\abs{\nabla u}^2)^{3/2}}\\
	 &\phantom{=}+\frac{2(1-u)}{\sqrt{(1-u)^2+\abs{\nabla u}^2}}\\
	&= \mathrm{div}\frac{\nabla u}{\sqrt{(1-u)^2+\abs{\nabla u}^2}}+\frac{2(1-u)}{\sqrt{(1-u)^2+\abs{\nabla u}^2}}.
\end{aligned}
\end{multline*}
	\end{proof}

\end{document}